\newtheorem{theorem}{Theorem}
\newtheorem{lemma}[theorem]{Lemma}
\newtheorem{corollary}[theorem]{Corollary}
\theoremstyle{definition}
\theoremstyle{remark}
\numberwithin{equation}{section}
\DeclareMathOperator{\supp}{supp}
\newcommand{\abs}[1]{\left\vert#1\right\vert}
\newcommand{\set}[1]{\left\{#1\right\}}
\newcommand{\proin}[2]{\left<#1,#2\right>}
\newcommand{\norm}[1]{\left\Vert#1\right\Vert}
\begin{document}
\title[]{Function spaces of coercivity for the fractional Laplacian in spaces of homogeneous type}
%


\author[]{Hugo Aimar}
\author[]{Ivana G\'{o}mez}
\thanks{This work was supported by the CONICET; ANPCyT-MINCyT (grants PICT-2568,2012; PICT-3631,2015); and UNL (grant CAID-50120110100371LI,2013)}
\subjclass[2010]{Primary 35J08, 35R11; Secondary 42C40}
%

\keywords{Fractional Laplacian, Spaces of Homogeneous Type, Haar Wavelets, Energy Estimates, Sobolev Spaces}

\begin{abstract}
We combine dyadic analysis through Haar type wavelets defined on Christ's families of generalized cubes, and Lax-Milgram theorem, in order to prove existence of Green's functions for fractional Laplacians on some function spaces of vanishing small resolution in spaces of homogeneous type.
\end{abstract}
\maketitle

\section{Introduction}
The most elementary view of fractional Laplacian operators in the Euclidean space $\mathbb{R}^n$ is provided by the Fourier transform. The symbol of a linear PDE with constant coefficients $\sum_{\abs{\alpha}\leq m}c_\alpha \partial^\alpha$, is the polynomial $P(\xi)=\sum_{\abs{\alpha}\leq m}c_\alpha (2\pi i\xi)^\alpha$ if we are considering the function $\widehat{\varphi}(\xi)=\int_{\mathbb{R}^n}e^{2\pi i\xi\cdot x}\varphi(x) dx$ as the Fourier transform of $\varphi$. In some cases, such as the Laplacian, $P$ is a radial function. For $-\tfrac{1}{4\pi^2}\Delta$, $P(\xi)=\abs{\xi}^2$. For $0<s<1$, the powers $P^s(\xi)=\abs{\xi}^{2s}$ of $P(\xi)$ are also symbols for operators which are well defined on smooth functions. Actually the most explicit kernel realization of such operator is given by
\begin{equation*}
(-\triangle)^{s}\varphi(x)=p.v.\int_{\mathbb{R}^n}\frac{\varphi(x)-\varphi(y)}{\abs{x-y}^{n+2s}}dy
\end{equation*}
with $\varphi$ smooth. The principal value is taken in the usual way of symmetric truncation about the origin. For $0<s<\tfrac{1}{2}$ the integral is absolutely convergent and there is no need for the principal value. When for $s<1$ these operators loose the local character of the case $s=1$. Several important facts for the Laplacian remain in the family $(-\triangle)^s$ ($0<s<1$). One of them is its role in the variational approach of the theory. In fact, $(-\triangle)^s$ is the Euler-Lagrange operator associated to the energy given by the Dirichlet bilinear form
\begin{equation*}
\int_{\mathbb{R}^n}\int_{\mathbb{R}^n}\frac{(u(x)-u(y))(v(x)-v(y))}{\abs{x-y}^{n+2s}}dx dy.
\end{equation*}
There are several points of view for $(-\triangle)^s$. The probabilistic approach is provided by the substitution of the Wiener process by the general L\'{e}vy process to generate the diffusion based on $(-\triangle)^s$. The analytic point of view, which in the case $s=\tfrac{1}{2}$ leads to the Dirichlet to Neumann operator, received an extraordinary impulse with the work of Caffarelli and Silvestre \cite{CaSi07}. In \cite{CaChaVa11} and \cite{CaSir18} several aspects of the theory are considered for nonlocal operators with kernels that are controlled by $\abs{x-y}^{-n-2s}$. Also the semigroup approach in \cite{StiTo10} provides an important tool of analysis. In this paper we explore some aspects of this robustness of the theory in a different direction; the underlying geometric setting. In metric measure spaces and in particular in spaces of homogeneous type. When the space has a well defined dimension, as is the case of Ahlfors $\gamma$-regular spaces, the Dirichlet form, the associated energy and the derived Euler-Lagrange operator are well defined at least for small $s>0$.
The interest for such a general setting is perhaps provided by some non differentiable structures like self similar fractals.

Let us briefly describe a non Euclidean paradigmatic situation of our context. The Sierpinski quadrant. Let $S$ be the Sierpinski triangle with vertices at the points $\vec{0}=(0,0)$, $\vec{e_1}=(1,0)$ and $\vec{e_2}=(0,1)$ of $\mathbb{R}^2$. The set $S$ is the only fixed point of the iterated function system induced by the affine contractions defined at the point $(x,y)\in \mathbb{R}^2$ by $F_1(x,y)=\tfrac{1}{2}(x,y)$, $F_2(x,y)=(\tfrac{1}{2}+\tfrac{1}{2}x,\tfrac{1}{2}y)$ and $F_3(x,y)=(\tfrac{1}{2}x,\tfrac{1}{2}+\tfrac{1}{2}y)$. Let $X=\cup_{m=1}^{\infty}2^m S$. The Haussdorf dimension of $X$ is $\gamma=\tfrac{\log 3}{\log 2}>1$. Moreover, if $\mu$ denotes the restriction to the Borel subsets of $X$ of the Hausdorff measure of dimension $\gamma$, we have that $\mu(B(x,r))\simeq r^\gamma$ for every $x\in X$, every $r>0$, with $B(x,r)=\{y\in X:\abs{x-y}<r\}$. In the terminology that we shall soon review, $(X,d,\mu)$ is a Ahlfors $\gamma$-regular space of homogeneous type with $d$ the restriction to $X\times X$ of the Euclidean distance.

There exist in $X$ also some natural \textit{dyadic sets} and corresponding Haar wavelets as orthonormal basis for $L^2(X,\mu)$. In the proof of our main result the role of Haar dyadic analysis will be crucial. So, let us describe briefly the dyadic sets and the Haar system in the Sierpinski quadrant $(S,d,\mu)$.

The basic Sierpinski triangle $S$, contained in the plane triangle $T$ with vertices at $\vec{0}$, $\vec{e_1}$ and $\vec{e_2}$, can be obtained as the limit, in the sense of Haussdorf, of the iteration of the operation $\Phi(A)=\cup_{i=1}^{3}F_i(A)$ starting with $A=T$. With $\Phi^m$ we denote the $m$-iteration of $\Phi$. For each $j\in Z$ the set $A_j=\cup_{m-k=j}2^k\Phi^m(T)$ is a union of non overlapping triangles of size $2^{-j}$. Set $T^j_l$ to denote these triangles with $l\in \mathbb{N}$ and $Q^j_l=T^j_l\cap X$. These sets in $X$ satisfies all the desired properties of nested partition of the space $X$ with a metric control and homogeneity in the number of offspring of each cube. Each $Q^j_l$ divides in exactly three \textit{cubes} of the next scale refinement level $Q^{j+1}_{l,1}$, $Q^{j+1}_{l,2}$ and $Q^{j+1}_{l,3}$. Since the space $V^j_l$ of real functions defined on $Q^j_l$ that are constant on each $Q^{j+1}_{l,i}$, $i=1,2,3$, has dimension three, a basis for $V^j_l$ is given $\{1,h^j_{l,1}, h^j_{l,2}\}$ where $1$ is the constant function on $Q^j_l$,
\begin{equation*}
h^j_{l,1}=\frac{4}{\sqrt{42}}3^{\tfrac{j}{2}}\left(\mathcal{X}_{Q^{j+1}_{l,1}}+\frac{1}{4}\mathcal{X}_{Q^{j+1}_{l,2}}-\frac{5}{4}\mathcal{X}_{Q^{j+1}_{l,3}} \right)
\end{equation*}
and 
\begin{equation*}
h^j_{l,2}=\frac{3}{\sqrt{14}}3^{\tfrac{j}{2}}\left(-\frac{2}{3}\mathcal{X}_{Q^{j+1}_{l,1}}+\mathcal{X}_{Q^{j+1}_{l,2}}-\frac{1}{3}\mathcal{X}_{Q^{j+1}_{l,3}} \right)
\end{equation*}
The countable family $\mathscr{H}=\{h^j_{l,1}, h^j_{l,2}:j\in \mathbb{Z}, l\in\mathbb{N}\}$ is an orthonormal basis for $L^2(X,\mu)$.

Aside from its generality as a model for many problems in the sciences, variational methods show also a remarkable geometric robustness. In the above setting $(X,d,\mu)$, a Dirichlet form associated to an energy norm is naturally defined,
\begin{equation*}
\int_X\int_X\frac{u(x)-u(y)}{d^s(x,y)}\frac{v(x)-v(y)}{d^s(x,y)}\frac{d\mu(x)d\mu(y)}{d^\gamma (x,y)}
\end{equation*}
also the associated energy
\begin{equation}\label{eq:energyfirst}
\iint_X\left(\frac{u(x)-u(y)}{d^s(x,y)}\right)^2\frac{d\mu(x)d\mu(y)}{d^\gamma (x,y)},
\end{equation}
are well defined at least for $u$ and $v$ compactly supported and with H\"{o}lder regularity exponent larger than $s$ (see Lemma~\ref{lemma:welldefinedenergy} below). 

The Euler-Lagrange equation associated to the minimization of the energy \eqref{eq:energyfirst} on an adequate subspace of the $L^2(X,\mu)$ functions with finite energy, is given by
\begin{equation*}
D^{2s}_du(x)=\int_X\frac{u(x)-u(y)}{d^{2s}(x,y)}\frac{d\mu(y)}{d^{\gamma}(x,y)}.
\end{equation*}
The search of a Green function for $D^{2s}$, submitted to some specific boundary type condition, through Lax-Milgram methods requires the analysis of the  coercivity of the bilinear form. In this note we provide subspaces of coercivity for Dirichlet form using Haar wavelet methods built on Christ'dyadic families in $(X,d,\mu)$. 

The paper is organized in brief sections that collect the main results of each topic which we shall use to accomplish our aims. Section~\ref{sec:GeometricSetting} contains the preliminary definitions. In Section~\ref{sec:energyandspacescoercivity} we introduce the generalized energy on any space of homogeneous type and we prove the basic result relating this energy with the dyadic one. Then we use the characterization of the dyadic energy in terms of the Haar systems proved in \cite{AcAiBoGo16} and we propose the spaces of coercivity. Section~\ref{sec:finiteenergyregularity} is devoted to a basic analysis of H\"{o}lder regularity of functions with finite energy when the space is Ahlfors $\gamma$-regular as a consequence of the results in \cite{MaSe79Lip}. In Section~\ref{sec:weaksolutionsandGreenfunction} we apply the results of sections~\ref{sec:energyandspacescoercivity} and~\ref{sec:finiteenergyregularity} and Lax-Milgram Theorem in order to prove the existence of Green's type functions on those spaces of coercivity.

\section{Geometric setting. Dyadic and Haar systems}\label{sec:GeometricSetting}
Let us briefly introduce in this section the basic definitions of space of homogeneous type, Ahlfors $\gamma$-regular space, Christ's dyadic families, Haar wavelets and dyadic distance.

A space of homogeneous type is a set $X$ endowed with two structures. A quasi-distance $d$ on $X$ is a symmetric, nonnegative real function on $X\times X$ such that $d(x,y)=0$ if and only if $x=y$ and for some constant $\kappa\geq 1$, the inequality $d(x,y)\leq\kappa (d(x,z)+d(z,y))$ holds for every $x$, $y$ and $z$ in $X$. The second structure is given by a positive measure $\mu$ an a $\sigma$-algebra containing the $d$-balls. Both structure are related by the doubling property; there exists a constant $A>0$ such that $0<\mu(B(x,2r))\leq A\mu(B(x,r))<\infty$ for every $x\in X$ and every $r>0$. Here $B(x,r)$ denotes the open ball $\{y\in X: d(x,y)<r\}$. Along this paper we shall assume that the space has no atoms ($\mu(\{x\})=0$ for every $x\in X$ and that $\mu(X)=+\infty$). The most important structure properties of spaces of homogeneous type and quasi-metrics can be found in \cite{MaSe79Lip}. On the other hand, the construction of metric controlled dyadic families in spaces of homogeneous type, due to M. Christ, can be found in \cite{Christ90}. Let us sketch the properties of these families here,
\begin{itemize}
	\item [(D1)] $\mathcal{D}=\cup_{j\in \mathbb{Z}}\mathcal{D}_j$;
	\item [(D2)] there exists $0<\nu<1$ such that for every cube $Q\in\mathcal{D}_j$ we have that diameter\, $Q\simeq \nu^j$ and eccentricity\, $Q\simeq 1$. Here, as usual $diam\, Q=\sup\{d(x,y): x,y\in Q\}$ and $eccentricity\, Q=\sup\{\tfrac{r(B_1)}{r(B_2)}: B_1\subseteq Q\, and\, B_2\supseteq Q\}$ where $B_i$, $i=1,2$ are $d$-balls;
	\item [(d3)] each $Q\in\mathcal{D}$ is an open set;
	\item [(d4)] except for a set of $\mu$-measure zero each $\mathcal{D}_j$ is a partition of $X$;
	\item [(D5)] for each $Q\in\mathcal{D}_{j+1}$ there exists one and only one $\widetilde{Q}\in\mathcal{D}_j$ such that $Q\subseteq\widetilde{Q}$;
	\item [(D6)]for some geometric constant $M$ and every $Q$ in $\mathcal{D}_j$ we have that $1\leq\#\vartheta(Q)\leq M$ where $\vartheta(Q)=\{Q'\in\mathcal{D}_{j+1}:Q'\subseteq Q\}$ is the offspring of $Q$;
	\item [(D7)] given $Q$ and $\bar{Q}$ in $\mathcal{D}$ then $\bar{Q}\cap Q=\emptyset$, $\bar{Q}\subseteq Q$ or $Q\subseteq\bar{Q}$;
	\item [(D8)] $X$ is a quadrant for $\mathcal{D}$, in other words, for every $Q\in\mathcal{D}$ the union of all those $\hat{Q}$ in $\mathcal{D}$ containing $Q$ coincides with $X$.
\end{itemize}
After the application a the procedure of closure and difference properties (d3) and (d4) can be changed into
\begin{itemize}
	\item [(D3)] each cube $Q$ in $\mathcal{D}$ is a Borel set;
	\item [(D4)] each $\mathcal{D}_j$ is a partition of $X$.	
\end{itemize}
See also \cite{AiBeIa07}.

Once such a dyadic family $\mathcal{D}$ is given on $X$ we also have Haar orthonormal bases for $L^2(X,\mu)$. Actually these bases are built on a multiresolution analysis (MRA) induced by the sequence $\mathcal{D}_j$ of dyadic cubes of level $j$. For a given integer $j\in\mathbb{Z}$ define
\begin{equation*}
V_j=\{f\in L^2(X,\mu): f\textrm{ restricted to } Q \textrm{ is constant for every } Q\in \mathcal{D}_j\}.
\end{equation*}
The properties of the family $\mathcal{D}$ induce the following properties for the MRA sequence $\{V_j: j\in\mathbb{Z}\}$.
\begin{itemize}
	\item [(MRA1)] $V_j\subset V_{j+1}$ for every $j\in\mathbb{Z}$;
	\item [(MRA2)] $\overline{\cup_{j\in\mathbb{Z}}V_j}=L^2(X,\mu)$;
	\item [(MRA3)] $\cap_{j\in\mathbb{Z}}V_j=\{0\}$;
	\item [(MRA4)] $\set{\frac{\mathcal{X}_Q}{\sqrt{\mu(Q)}}: Q\in\mathcal{D}_j}$ is an orthonormal basis for $V_j$ for every $j\in\mathbb{Z}$. As usual we shall denote by $P_j$ the orthonormal projector of $L^2(X,\mu)$ onto $V_j$.
\end{itemize}
For a given $Q\in \mathcal{D}$, set $\vartheta(Q)$ to denote the offspring of $Q$. That is $\vartheta(Q)$ is the family of all $Q'\in\mathcal{D}_{j(Q)+1}$ with $Q'\subseteq Q$ and $j(Q)$ the level of $Q$. From (D6) we have that $1\leq\#\vartheta(Q)\leq M$ for some geometric constant $M$. Let $Q$ be such that $\#(\vartheta(Q))>1$. Set $\mathcal{V}_Q$ to denote the finite dimensional (at most $M$) subspace of those functions in $L^2(X,\mu)$ vanishing outside $Q$ and which are constant on each $Q'\in\vartheta(Q)$. An algebraic basis for $\mathcal{V}_Q$ is given by $\left\{\tfrac{\mathcal{X}_Q}{\sqrt{\mu(Q)}}\right\}\cup\left\{\mathcal{X}_{Q'}:Q'\in\widetilde{\vartheta(Q)}\right\}$ with $\widetilde{\vartheta(Q)}$ any subset of $\vartheta(Q)$ with $\#\vartheta(Q)-1$ elements. If we orthonormalize this basis for $\mathcal{V}_Q$ preserving the first function $\tfrac{\mathcal{X}_Q}{\sqrt{\mu(Q)}}$, we get the basis $\mathcal{B}(Q)=\left\{\tfrac{\mathcal{X}_Q}{\sqrt{\mu(Q)}}\right\}\cup\left\{h_l: l=1,\ldots,\#\vartheta(Q)-1\right\}$. Set $\mathscr{H}=\bigcup_{Q\in\mathcal{D}}\mathscr{H}(Q)$, where $\mathscr{H}(Q)=\{h_l: l=1,\ldots,\#\vartheta(Q)-1\}$. Then $\mathscr{H}$ is an orthonormal basis for $L^2(X,\mu)$. Notice that each $h$ has support in $Q$ and mean value zero. Hence
\begin{equation*}
\norm{f}^2_{L^2(X,\mu)}=\sum_{h\in\mathscr{H}}\abs{\proin{f}{h}}^2
\end{equation*}
and $\mathscr{H}$ is an unconditional basis for $L^p(X,\mu)$ for every $1<p<\infty$. We shall write $Q(h)$ to denote the cube in which $h$ is based in the above sense. Also $j(h)$ denotes the level of $Q(h)$.

For a $\gamma>0$, an Ahlfors $\gamma$-regular space $(X,d,\mu)$ is a metric space $(X,d)$ with a Borel measure $\mu$ such that $c_1r^{\gamma}\leq\mu(B(x,r))\leq c_2r^{\gamma}$ for some positive constants $c_1$ and $c_2$, every $x\in X$, and every $r>0$. Ahlfors condition implies that the space is of homogeneous type and unbounded and that $\mu(\{x\})=0$ for every point $x\in X$.

An elementary but central fact for the finiteness of the Dirichlet form, the corresponding energy and the associated Euler-Lagrange operator is provided by the local and global integrability of powers of $d$. 
\begin{lemma}\label{lem:localglobalintegrability}
	Let $(X,d,\mu)$ be Ahlfors $\gamma$-regular, then
\begin{enumerate}[(a)]
	\item
	$\int_{B(x,r)}\frac{d\mu(y)}{d^{\gamma+s}(x,y)}=	\int_{d(x,y)\geq r}\frac{d\mu(y)}{d^{\gamma-s}(x,y)}=+\infty$, for every $x\in X$, $r>0$ and $s\geq 0$; and
\item $\int_{B(x,r)}\frac{d\mu(y)}{d^{\gamma-s}(x,y)}\simeq r^s$ and	$\int_{d(x,y)\geq r}\frac{d\mu(y)}{d^{\gamma+s}(x,y)}\simeq r^{-s}$, for every $x\in X$, $r>0$ and $s>0$.
\end{enumerate}
\end{lemma}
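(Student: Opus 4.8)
The plan is to reduce both parts to geometric annular decompositions of $X$ centered at $x$, on each of which Ahlfors regularity controls the integrand through the distance and the domain through the measure. On an annulus at radius $R$ one has $d(x,y)\simeq R$, so the power of $d$ in the integrand is essentially a constant power of $R$, and the contribution of the annulus reduces to a power of $R$ times its measure.

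For the finite estimates in (b) I would treat the upper bounds first. Writing $C_k=B(x,2^{-k}r)\setminus B(x,2^{-k-1}r)$ for the inner integral and $D_k=B(x,2^{k+1}r)\setminus B(x,2^k r)$ for the outer one, the Ahlfors upper bound gives $\mu(C_k)\le c_2(2^{-k}r)^\gamma$ and $\mu(D_k)\le c_2(2^{k+1}r)^\gamma$, whence $\int_{B(x,r)}d^{-(\gamma-s)}\,d\mu\lesssim\sum_{k\ge0}(2^{-k}r)^{s}$ and $\int_{d\ge r}d^{-(\gamma+s)}\,d\mu\lesssim\sum_{k\ge0}(2^k r)^{-s}$. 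Both are convergent geometric series precisely because $s>0$, summing to multiples of $r^s$ and $r^{-s}$ respectively. For the matching lower bounds I would discard all annuli but one: a single shell $B(x,r)\setminus B(x,\lambda r)$ (resp. $B(x,\Lambda r)\setminus B(x,r)$) of measure $\gtrsim r^\gamma$ carries integrand $\simeq r^{-(\gamma-s)}$ (resp. $\simeq r^{-(\gamma+s)}$), yielding $\gtrsim r^\gamma\cdot r^{-(\gamma\mp s)}=r^{\pm s}$.

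The divergence statements in (a) come from the same mechanism with all annuli retained. Summing over a geometric family of shells produces $\sum_k(\lambda^k r)^{-s}$ near $x$ (for $\int_{B(x,r)}d^{-(\gamma+s)}$) and $\sum_k(\Lambda^k r)^{s}$ at infinity (for $\int_{d\ge r}d^{-(\gamma-s)}$); since the exponents of the ratios are nonnegative when $s\ge0$, each series diverges, every term being constant in the borderline case $s=0$.

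The one point requiring care, and the main obstacle, is the lower bound on the measure of an annulus: a bare dyadic shell $B(x,2R)\setminus B(x,R)$ may be $\mu$-null, so $\mu(B(x,2R))-\mu(B(x,R))$ cannot be bounded below from the two-sided Ahlfors inequalities alone. I would overcome this by fixing the ratio of the shells: choosing $\lambda\in(0,1)$ with $c_2\lambda^\gamma<c_1/2$ forces $\mu(B(x,\lambda^k r)\setminus B(x,\lambda^{k+1}r))\ge(c_1-c_2\lambda^\gamma)(\lambda^k r)^\gamma\gtrsim(\lambda^k r)^\gamma$, and symmetrically choosing $\Lambda$ with $c_1\Lambda^\gamma>2c_2$ gives $\mu(B(x,\Lambda^{k+1}r)\setminus B(x,\Lambda^k r))\gtrsim(\Lambda^k r)^\gamma$. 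With these fat shells replacing the dyadic ones, every lower bound above becomes rigorous and the four assertions follow.
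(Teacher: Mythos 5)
Your proof is correct and follows exactly the route the paper indicates for this lemma, namely a dyadic (geometric annular) decomposition of each integral combined with the two-sided Ahlfors bounds; the paper gives only a one-line sketch. Your fat-shell device --- fixing the ratio so that $c_2\lambda^\gamma<c_1$ (resp.\ $c_1\Lambda^\gamma>c_2$) to get a genuine lower bound on the measure of an annulus --- is precisely the detail needed to make that sketch rigorous, since a bare dyadic shell could a priori be $\mu$-null.
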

We use the notation $A\simeq B$ if the quotient $\tfrac{A}{B}$ is bounded above and bellow by geometric constants.
The proof follow by dyadic decomposition of each integral and the $\gamma$- regularity of the space.

In the next statement we introduce the dyadic distance induced by a dyadic family $\mathcal{D}$ on $X$.
\begin{lemma}\label{lemma:dyadicdistance}
	Let $(X,d,\mu)$ be a space of homogeneous type with no atoms ($\mu(\{x\})=0, x\in X$).
	The function $\delta:X\times X\to\mathbb{R}^+$ defined as zero on the diagonal and\begin{equation*}
	\delta(x,y)=\inf\{\mu(Q): Q\in\mathcal{D}, x\in Q\, and\, y\in Q\}
	\end{equation*}
	is a distance on $X$ such that $(X,\delta,\mu)$ is a Ahlfors $1$-regular space. Moreover, for some constant $C>0$ we have that 
	$\mu(B(x,d(x,y)))\leq C\delta(x,y)$. When $(X,d,\mu)$ is Ahlfors $\gamma$-regular, we also have $d^\gamma(x,y)\leq C\delta(x,y)$.
\end{lemma}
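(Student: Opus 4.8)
The plan is to first record the structural fact that makes the definition transparent: for $x\neq y$ the dyadic cubes containing both $x$ and $y$ form a finite chain, totally ordered by inclusion, so the infimum defining $\delta(x,y)$ is attained. Indeed, by (D2) any cube $Q$ containing $x$ and $y$ satisfies $\operatorname{diam} Q\simeq\nu^{j(Q)}\geq d(x,y)>0$, which bounds $j(Q)$ from above; among the cubes containing both points, which are nested by (D7), the finest one $Q_{x,y}$ has the least measure, so that $\delta(x,y)=\mu(Q_{x,y})$. Since each cube has eccentricity $\simeq 1$ it contains a $d$-ball of positive radius, and balls have positive measure by the doubling hypothesis; thus $\delta(x,y)=\mu(Q_{x,y})>0$ whenever $x\neq y$, while symmetry is immediate from the symmetric role of $x,y$ in the definition.

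For the triangle inequality I would exploit the nesting property (D7) to obtain in fact the stronger ultrametric inequality $\delta(x,z)\leq\max\{\delta(x,y),\delta(y,z)\}$. Given $\varepsilon>0$, pick cubes $Q_1\ni x,y$ and $Q_2\ni y,z$ with $\mu(Q_1)$ and $\mu(Q_2)$ within $\varepsilon$ of $\delta(x,y)$ and $\delta(y,z)$ respectively. Both contain $y$, hence are not disjoint, so by (D7) one is contained in the other; the larger one contains all three of $x,y,z$, whence $\delta(x,z)\leq\max\{\mu(Q_1),\mu(Q_2)\}$. Letting $\varepsilon\to0$ gives the claim, and in particular the ordinary triangle inequality, so that $\delta$ is a (ultra)metric.

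Next, for the Ahlfors $1$-regularity I would identify the $\delta$-balls with dyadic cubes. Let $Q_j(x)$ denote the cube of $\mathcal{D}_j$ containing $x$, so that $Q_{j+1}(x)\subseteq Q_j(x)$ by (D5). As $j\to-\infty$ the quadrant property (D8) together with $\mu(X)=+\infty$ forces $\mu(Q_j(x))\to+\infty$, while as $j\to+\infty$ the no-atoms hypothesis forces $\mu(Q_j(x))\downarrow0$, since $Q_j(x)\subseteq B(x,C\nu^j)$ and $\mu(B(x,r))\to\mu(\{x\})=0$ by continuity from above. A direct check then shows $B_\delta(x,r)=Q_{j^*}(x)$, where $j^*=j^*(x,r)$ is the smallest index $j$ with $\mu(Q_{j}(x))<r$. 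The key quantitative input is that a cube and its dyadic parent have comparable measure: by (D2) both $Q_{j^*}(x)$ and $Q_{j^*-1}(x)$ are trapped between $d$-balls of radius $\simeq\nu^{j^*}$ (up to the fixed factor $\nu^{-1}$), so doubling yields $\mu(Q_{j^*-1}(x))\leq C_0\,\mu(Q_{j^*}(x))$. Then $\mu(Q_{j^*}(x))<r\leq\mu(Q_{j^*-1}(x))\leq C_0\,\mu(Q_{j^*}(x))$ gives $\mu(B_\delta(x,r))\simeq r$, the desired $1$-regularity.

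Finally, the comparison estimates follow from the same ball-versus-cube bookkeeping. The minimal cube $Q_{x,y}$ has $\operatorname{diam} Q_{x,y}\geq d(x,y)$ and, being of bounded eccentricity, contains a ball of radius $\simeq\nu^{j(Q_{x,y})}\gtrsim d(x,y)$ centred at bounded distance from $x$; doubling then gives $\mu(B(x,d(x,y)))\lesssim\mu(Q_{x,y})=\delta(x,y)$, i.e.\ $\mu(B(x,d(x,y)))\leq C\delta(x,y)$. In the Ahlfors $\gamma$-regular case the lower bound $\mu(B(x,d(x,y)))\geq c_1 d^\gamma(x,y)$ immediately upgrades this to $d^\gamma(x,y)\leq C\delta(x,y)$. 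I expect the main technical obstacle to be the uniform parent--child measure comparability and the attendant identification of $\delta$-balls with cubes, since everything quantitative---both the $1$-regularity and the two comparison inequalities---rests on converting the metric control (D2) plus doubling into uniform measure ratios; by contrast the metric axioms themselves are comparatively soft, following formally from nesting and the no-atoms assumption.
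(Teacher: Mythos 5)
Your proposal is correct and follows essentially the same route as the paper's proof: the ultrametric inequality via the nesting property (D7), the identification of $\delta$-balls with dyadic cubes plus parent--child measure comparability from (D2) and doubling for the $1$-regularity, and the observation that the smallest common cube contains a $d$-ball of radius $\gtrsim d(x,y)$ for the comparison estimates. You are in fact slightly more explicit than the paper on a couple of points it leaves implicit (that $\mu(Q_j(x))\to 0$ and $\to\infty$ along the chain through $x$, so that the relevant extremal cubes exist); the only quibble is that the chain of cubes containing both $x$ and $y$ is not finite but merely bounded above in level, which is all you actually use.
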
	
\begin{proof}
	Let us first check the triangle inequality. Let $x$, $y$ and $z$ be three given points in $X$. Let $Q(x,y)$ be the smallest cube in $\mathcal{D}$ such that $x$ and $y$ belong to $Q(x,y)$. Similarly, set $Q(y,z)$ to denote the smallest cube in $\mathcal{D}$ containing $y$ and $z$. Since $y\in Q(x,y)\cap Q(y,z)$, by (D7) we must have $Q(x,y)\subset Q(y,z)$ or $Q(y,z)\subset Q(x,y)$. Hence $x$ and $z$ belong both to $Q(y,z)$ or to $Q(x,y)$. This fact implies that $\delta(x,y)\leq \sup\{\mu(Q(y,z)),\mu(Q(x,y))\}=\sup\{\delta(y,z),\delta(x,y)\}$, and $\delta$ is in fact an ultrametric.
	
	Notice that if $x\neq y$ are two points in $X$, since $d(x,y)>0$, we have, from property (D2), that for some large $j$, $x$ and $y$ can not belong to the same $Q\in\mathcal{D}_j$. Hence $\delta(x,y)>0$.
	
	Let us describe the $\delta$- ball centered at $x\in X$ with radious $r>0$. Notice that $B_\delta(x,r)=\{y:\delta(x,y)<r\}=\cup_{\{Q\in\mathcal{D}: x\in Q, \mu(Q)<r\}} Q$ which is actually a the largest dyadic cube $Q^r_x$ containing $x$ with measure less than $r$. Hence $\mu(B_\delta(x,r))=\mu(Q^r_x)<r$. On the other hand, since $Q^r_x$ is the largest cube containing $x$ with measure less than $r$, $\widetilde{Q^r_x}$ the first ancestor of $Q^r_x$, must to satisfy $\mu(\widetilde{Q^r_x})\geq r$. From (D2) and the doubling property for $d$-balls we obtain $\mu(Q^r_x)\geq c\mu(\widetilde{Q^r_x})\geq cr$ for some $0<c<1$. In other words, $cr\mu(B_\delta(x,r))\leq r$ for every $x\in X$ and every $r>0$.
	
	Let us prove that $\mu(B(x,d(x,y)))\leq C\delta(x,y)$, for some constant $C$. Both sides of the inequality vanish if $x=y$ because $X$ has no atoms. Assume $x\neq y$. Let $Q$ be the smallest dyadic cube in $\mathcal{D}$ such that $x$ and $y$ belong to $Q$. Then, with the notation in (D2), $B_1=B(x_1,r_1)$ and $B_2=B(x_1,mr_1)$, $\delta (x,y)=\mu(Q)\geq\mu(B_1)$. On the other hand, $B(x,d(x,y))\subset m\kappa(2\kappa+1)B_1$. In fact, with $B_1=B(x_1,r_1)$, $B_2=B(x_1,mr_1)$, $m$ constant, we have that, for $z\in B(x,d(x,y))$,
	\begin{align*}
	d(z,x_1)&\leq\kappa[d(z,x)+d(x,x_1)]\\
	&<\kappa[d(x,y)+mr_1]\\
	&\leq\kappa[\kappa(d(x,x_1)+d(x_1,y))+mr_1]\\
	&<m\kappa[2\kappa+1]r_1.
	\end{align*}
The last statement follows from the above and the fact that $\mu(B(x,d(x,y)))\simeq d^\gamma(x,y)$ in the case of Ahlfors $\gamma$-regularity.
\end{proof}

Let us point out that the inequality $\delta^\gamma\leq cd$ does not hold even in Euclidean settings. Since $(X,\delta,\mu)$ is Ahlfors $1$-regular space, then satisfies the integral properties of powers of $\delta$ contained in Lemma~\ref{lem:localglobalintegrability} with $\gamma=1$.

\section{Energy and spaces of coercivity}\label{sec:energyandspacescoercivity}
Let $(X,d,\mu)$ be a nonatomic space of homogeneous type with $\mu(X)=+\infty$. In the spirit of \cite{GoKoSha10} and references there in, for generally non necessarily Ahlfors regular spaces, it is natural to consider a generalized notion of energy given by
\begin{equation*}
\mathscr{E}^{d,\mu}_\sigma(u)=\iint_{X\times X}\frac{\abs{u(x)-u(y)}^2}{\mu(B(x,d(x,y)))^{1+2\sigma}}d\mu(x)d\mu(y)=\iint_{X\times X}\abs{\frac{u(x)-u(y)}{\mu(B(x,d(x,y)))^{\sigma}}}^2\frac{d\mu(x)d\mu(y)}{\mu(B(x,d(x,y)))},
\end{equation*}
for $\sigma>0$.

It is not clear, at first glance, whether or not there are in general nontrivial (non constant) functions with finite energy. Nevertheless, the subspace $H^\sigma_{d,\mu}$ of functions in $L^2(X,\mu)$ with finite energy is a Hilbert space with the norm
\begin{equation*}
\norm{u}_{H^\sigma_{d,\mu}}=\norm{u}_{L^2(\mu)}+\sqrt{\mathscr{E}^{d,\mu}_\sigma(u)}.
\end{equation*}
We say that a Hilbert subspace $H^{\sigma,0}_{d,\mu}$ of $H^{\sigma}_{d,\mu}$ is of coercivity for $\mathscr{E}^{d,\mu}_\sigma$ if there exists a constant $c$, depending on $H^{\sigma,0}_{d,\mu}$, such that the inequality
\begin{equation*}
\mathscr{E}^{d,\mu}_\sigma(u)\geq c\norm{u}^2_{L^2(\mu)}
\end{equation*}
holds for every $u\in H^{\sigma,0}_{d,\mu}$. In other words, $\sqrt{\mathscr{E}^{d,\mu}_\sigma(u)}$ on $H^{\sigma,0}_{d,\mu}$ becomes a norm, which is equivalent to $\norm{u}_{H^{\sigma}_{d,\mu}}$. In the search of spaces of type $H^{\sigma,0}_{d,\mu}$, we shall use dyadic analysis,  the estimate for $\mu(B(x,d(x,y)))$ in terms of $\delta(x,y)$ contained in Lemma~\ref{lemma:dyadicdistance} and a result in \cite{AcAiBoGo16} concerning the characterization of $H^{\sigma}_{\delta,\mu}$ in terms of Haar coefficients. Notice that from Lemma~\ref{lemma:dyadicdistance}, $H^{\sigma}_{\delta,\mu}$ is the subspace of those functions $u$ in $L^2(X,\mu)$ such that
\begin{equation*}
\mathscr{E}^{\delta,\mu}_\sigma(u)=\iint_{X\times X}\frac{\abs{u(x)-u(y)}^2}{\delta(x,y)^{1+2\sigma}}d\mu(x)d\mu(y)
\end{equation*}
is finite. Let us state precisely the characterization of $H^{\sigma}_{\delta,\mu}$ given in \cite{AcAiBoGo16}.
\begin{lemma}[Theorem~5 in \cite{AcAiBoGo16}]\label{lemma:AABG16}
Let $(X,d,\mu)$ be a non atomic space of homogeneous type with $\mu(X)=+\infty$. Let $\mathcal{D}$ be any dyadic system satisfying (D1) to (D8), let $\mathscr{H}$ be any Haar system associated to $\mathcal{D}$ and let $\delta$ be the dyadic distance induced by $\mathcal{D}$ on $X$. Then, for $0<\sigma<1$, the space $H^{\sigma}_{\delta,\mu}$ coincides with the subspace of $L^2(X,\mu)$ for which the series $\sum_{h\in\mathscr{H}}\frac{\abs{\proin{u}{h}}^2}{\mu(Q(h))^{2\sigma}}$ converges. Moreover, $\mathscr{E}^{\delta,\mu}_\sigma(u)=\sum_{h\in\mathscr{H}}\frac{\abs{\proin{u}{h}}^2}{\mu(Q(h))^{2\sigma}}$ and
	\begin{equation*}
	\norm{u}^2_{\sigma,\delta}\simeq \norm{u}^2_{L^2(X,\mu)}+\sum_{h\in\mathscr{H}}\frac{\abs{\proin{u}{h}}^2}{\mu(Q(h))^{2\sigma}}.
	\end{equation*}
\end{lemma}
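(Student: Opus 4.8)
The plan is to exploit the ultrametric nature of $\delta$ established in Lemma~\ref{lemma:dyadicdistance}. Since $\delta(x,y)=\mu(Q(x,y))$, where $Q(x,y)$ is the smallest cube of $\mathcal{D}$ containing both points, the off-diagonal set $X\times X\setminus\set{x=y}$ splits (up to a $\mu\times\mu$-null set, as $\mu$ is non atomic) into the disjoint blocks $\set{(x,y):Q(x,y)=Q}$ indexed by $Q\in\mathcal{D}$, and $(x,y)$ lies in the block of $Q$ exactly when $x$ and $y$ sit in two \emph{distinct} children of $Q$. On each block $\delta$ is the constant $\mu(Q)$, so
\begin{equation*}
\mathscr{E}^{\delta,\mu}_\sigma(u)=\sum_{Q\in\mathcal{D}}\frac{1}{\mu(Q)^{1+2\sigma}}\sum_{\substack{Q',Q''\in\vartheta(Q)\\ Q'\neq Q''}}\int_{Q'}\int_{Q''}\abs{u(x)-u(y)}^2\,d\mu(x)d\mu(y),
\end{equation*}
a series of non negative terms, so every rearrangement below is legitimate by Tonelli. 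I stress that one cannot instead expand $u=\sum_h\proin{u}{h}h$ inside $\abs{u(x)-u(y)}^2$ and integrate termwise: by Lemma~\ref{lem:localglobalintegrability}(a) the integral $\int\delta(x,y)^{-(1+2\sigma)}d\mu(y)$ diverges, so the individual ``diagonal'' contributions are infinite. The cube decomposition is precisely what encodes the necessary cancellation while keeping each summand finite and positive; this is the conceptual crux.

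Next I would turn each inner block into Haar energy. Writing $J_Q=\iint_{Q\times Q}\abs{u(x)-u(y)}^2\,d\mu\, d\mu$, the inner block equals $J_Q-\sum_{Q'\in\vartheta(Q)}J_{Q'}$, and the elementary identity $J_Q=2\mu(Q)\int_Q\abs{u-m_Q u}^2\,d\mu$ (with $m_Q u$ the $\mu$-average of $u$ on $Q$) reduces everything to the oscillation of $u$. The key local fact is the localized completeness of the Haar system: the functions $\set{h:Q(h)\subseteq Q}$ form an orthonormal basis of the mean-zero subspace of $L^2(Q,\mu)$, whence
\begin{equation*}
\int_Q\abs{u-m_Q u}^2\,d\mu=\sum_{h:\,Q(h)\subseteq Q}\abs{\proin{u}{h}}^2.
\end{equation*}
I would prove this by expanding $u$ in $\mathscr{H}$ and checking, using (D7) together with the mean-value-zero property of Haar functions, that the would-be coefficient of $\mathcal{X}_Q$ cancels exactly against $m_Q u$.

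Combining the two identities yields a telescoping sum over the tower of ancestors of each cube. Since $X$ is a quadrant for $\mathcal{D}$ (property (D8)) and $\mu(X)=+\infty$, every $Q$ has an infinite increasing chain of ancestors $Q=Q_0\subset Q_1\subset\cdots$ with $v_k:=\mu(Q_k)\uparrow\infty$ (continuity of $\mu$ from below). Interchanging the order of summation gathers, for each $h$, the weight
\begin{equation*}
c(h)=\sum_{k\geq 0}2v_k\Big(\frac{1}{v_k^{1+2\sigma}}-\frac{1}{v_{k+1}^{1+2\sigma}}\Big),\qquad v_0=\mu(Q(h)),
\end{equation*}
so that $\mathscr{E}^{\delta,\mu}_\sigma(u)=\sum_{h\in\mathscr{H}}\abs{\proin{u}{h}}^2\,c(h)$. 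The main computational obstacle is then to evaluate $c(h)$ and show $c(h)\simeq\mu(Q(h))^{-2\sigma}$ uniformly in $h$. Reorganizing the telescoping series gives $c(h)=2v_0^{-2\sigma}+2\sum_{k\geq1}(v_k-v_{k-1})v_k^{-(1+2\sigma)}$, from which the lower bound $c(h)\geq 2\mu(Q(h))^{-2\sigma}$ is immediate (it is just the leading, same-cube block), while for the upper bound I would compare the tail to an integral: for $t\in[v_{k-1},v_k]$ one has $t^{-(1+2\sigma)}\geq v_k^{-(1+2\sigma)}$, so the tail is dominated by $\int_{v_0}^\infty t^{-(1+2\sigma)}\,dt=\tfrac{1}{2\sigma}\,v_0^{-2\sigma}$, giving $c(h)\leq(2+\tfrac1\sigma)\mu(Q(h))^{-2\sigma}$. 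Pleasantly, this bound needs only $v_k\uparrow\infty$ and no geometric growth of consecutive cube measures — important since by (D6) a cube may even have a single child. This integral comparison is exactly where the constants enter and depend on $\sigma$, so the natural computation delivers the energy comparison $\mathscr{E}^{\delta,\mu}_\sigma(u)\simeq\sum_h\abs{\proin{u}{h}}^2\mu(Q(h))^{-2\sigma}$ (with the precise equality requiring the normalization of \cite{AcAiBoGo16}). Finally, adding $\norm{u}^2_{L^2(X,\mu)}$ and using $(a+b)^2\simeq a^2+b^2$ upgrades this to the stated norm equivalence for $\norm{u}^2_{\sigma,\delta}$.
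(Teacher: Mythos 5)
The paper does not actually prove this lemma: it is imported verbatim as Theorem~5 of \cite{AcAiBoGo16}, so there is no internal argument to compare against, and your blind attempt is in effect supplying the missing proof. As far as I can check it is correct and follows the natural route: the ultrametric $\delta$ is constant equal to $\mu(Q)$ on the block of pairs lying in distinct children of $Q$ (and these blocks exhaust the off-diagonal up to a null set, using (D7), (D8) and nonatomicity); each block integral reduces, via $J_Q=2\mu(Q)\int_Q\abs{u-m_Qu}^2d\mu$ and the localized Parseval identity $\int_Q\abs{u-m_Qu}^2d\mu=\sum_{h:\,Q(h)\subseteq Q}\abs{\proin{u}{h}}^2$, to Haar coefficients; and the weight $c(h)$ obtained after the (legitimately Tonelli-justified, since all regrouped terms are nonnegative) interchange is controlled by the integral test, using only that $\mu(Q_k)\uparrow\mu(X)=+\infty$ along the ancestor tower. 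Your observation that no geometric growth of the $\mu(Q_k)$ is needed is a nice point, since (D6) indeed allows cubes with a single child. One caveat worth stating explicitly: with this paper's normalization of $\mathscr{E}^{\delta,\mu}_\sigma$ as a double integral, the \emph{exact equality} $\mathscr{E}^{\delta,\mu}_\sigma(u)=\sum_h\abs{\proin{u}{h}}^2\mu(Q(h))^{-2\sigma}$ asserted in the statement cannot hold in general, because your computation shows the true coefficient $c(h)$ depends on the measures of all ancestors of $Q(h)$; what one actually gets is the two-sided bound $2\,\mu(Q(h))^{-2\sigma}\leq c(h)\leq(2+\sigma^{-1})\,\mu(Q(h))^{-2\sigma}$. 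You flag this correctly, and the equivalence is all the paper ever uses (the proof of Theorem~\ref{thm:CoercivityPi} only needs the lower bound, up to a constant), so this is an imprecision in the transcribed statement rather than a gap in your argument.
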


For $\lambda>0$, let $M_\lambda$ be the Hilbert subspace of $L^2(X,\mu)$ generated by all the Haar functions $h\in\mathscr{H}$ with $\mu(Q(h))>\lambda$. In other words, $M_\lambda$ is the $L^2$ closure of the linear span of $\mathscr{H}=\{h\in\mathscr{H}:\mu(Q(h))>\lambda\}$. Let us write $\Pi_\lambda$ to denote the orthogonal projection of $L^2$ onto $M_\lambda$ and $\textrm{Ker }\Pi_\lambda$ to denote the kernel of $\Pi_\lambda$. The main result of this section is now an easy consequence of lemmas~\ref{lemma:dyadicdistance} and~\ref{lemma:AABG16}.
\begin{theorem}\label{thm:CoercivityPi}
	Let $(X,d,\mu)$ be a nonatomic space of homogeneous type with $\mu(X)=+\infty$. Then every space of the type $H^{\sigma}_{\delta,\mu}\cap \textrm{Ker } \Pi_\lambda$, $\lambda>0$, is a space of coercivity for $\mathscr{E}^{d,\mu}_\sigma$. In other words, for $u\in H^{\sigma}_{\delta,\mu}$ with $\Pi_\lambda u=0$, for some $\lambda$, we have the inequality
	\begin{equation*}
	\mathscr{E}^{d,\mu}_\sigma(u)\geq C\norm{u}^2_{L^2},
	\end{equation*}
	with $C$ depending on $\lambda$.
\end{theorem}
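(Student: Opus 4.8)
The plan is to reduce the generalized energy $\mathscr{E}^{d,\mu}_\sigma$ to the dyadic energy $\mathscr{E}^{\delta,\mu}_\sigma$ by means of the comparison in Lemma~\ref{lemma:dyadicdistance}, and then to exploit the Haar-coefficient description of $\mathscr{E}^{\delta,\mu}_\sigma$ furnished by Lemma~\ref{lemma:AABG16}, combined with the kernel hypothesis $\Pi_\lambda u=0$ and Parseval's identity for the orthonormal basis $\mathscr{H}$.

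First I would compare the two energies at the level of their integrands. Lemma~\ref{lemma:dyadicdistance} provides a constant $C>0$ with $\mu(B(x,d(x,y)))\leq C\,\delta(x,y)$ for all $x,y\in X$. Since $1+2\sigma>0$, raising to that power and inverting reverses the inequality, so that
\begin{equation*}
\frac{1}{\mu(B(x,d(x,y)))^{1+2\sigma}}\geq\frac{1}{C^{1+2\sigma}}\,\frac{1}{\delta(x,y)^{1+2\sigma}}
\end{equation*}
for every off-diagonal pair. Multiplying by $\abs{u(x)-u(y)}^2$ and integrating against $d\mu(x)\,d\mu(y)$ then yields $\mathscr{E}^{d,\mu}_\sigma(u)\geq C^{-(1+2\sigma)}\,\mathscr{E}^{\delta,\mu}_\sigma(u)$. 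Hence it suffices to bound the dyadic energy $\mathscr{E}^{\delta,\mu}_\sigma(u)$ from below by a multiple of $\norm{u}^2_{L^2}$.

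For that I would apply Lemma~\ref{lemma:AABG16} (in its range $0<\sigma<1$), which gives the exact identity $\mathscr{E}^{\delta,\mu}_\sigma(u)=\sum_{h\in\mathscr{H}}\abs{\proin{u}{h}}^2\mu(Q(h))^{-2\sigma}$; the series converges because $u\in H^{\sigma}_{\delta,\mu}$. The hypothesis $\Pi_\lambda u=0$ says precisely that $\proin{u}{h}=0$ whenever $\mu(Q(h))>\lambda$, so only terms with $\mu(Q(h))\leq\lambda$ contribute. For each surviving term $\mu(Q(h))^{2\sigma}\leq\lambda^{2\sigma}$, whence
\begin{equation*}
\mathscr{E}^{\delta,\mu}_\sigma(u)\geq\frac{1}{\lambda^{2\sigma}}\sum_{h:\,\mu(Q(h))\leq\lambda}\abs{\proin{u}{h}}^2.
\end{equation*}
Because the omitted coefficients all vanish, the restricted sum equals the full sum $\sum_{h\in\mathscr{H}}\abs{\proin{u}{h}}^2$, which by Parseval's identity is exactly $\norm{u}^2_{L^2}$. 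Chaining the three estimates produces $\mathscr{E}^{d,\mu}_\sigma(u)\geq C^{-(1+2\sigma)}\lambda^{-2\sigma}\,\norm{u}^2_{L^2}$, the asserted coercivity with constant depending only on $\lambda$ and the geometric constants.

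There is no serious obstacle once the two lemmas are available; the only point that demands care is the orientation of the inequality, namely that the comparison $\mu(B(x,d(x,y)))\leq C\delta(x,y)$ is exactly the direction needed to bound $\mathscr{E}^{d,\mu}_\sigma$ from \emph{below} (the opposite estimate would merely control the $\delta$-energy by the $d$-energy and be useless here). It is also worth recording that the argument uses nothing beyond the restriction of the Haar support to small cubes and Parseval, so the constant degrades like $\lambda^{-2\sigma}$ as $\lambda\to 0$; this reflects the fact that the ambient space $H^{\sigma}_{\delta,\mu}$, where cubes of arbitrarily large measure occur, is not itself coercive.
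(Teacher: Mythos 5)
Your proposal is correct and follows essentially the same route as the paper: the lower bound $\mathscr{E}^{d,\mu}_\sigma(u)\geq c\,\mathscr{E}^{\delta,\mu}_\sigma(u)$ via the comparison $\mu(B(x,d(x,y)))\leq C\delta(x,y)$ from Lemma~\ref{lemma:dyadicdistance}, the Haar-coefficient identity of Lemma~\ref{lemma:AABG16}, the restriction to cubes with $\mu(Q(h))\leq\lambda$ forced by $\Pi_\lambda u=0$, and Parseval. No discrepancies worth noting.
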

\begin{proof}
	Notice that if $\Pi_\lambda u=0$ and $\lambda'>\lambda$, then $\Pi_{\lambda'} u=0$. Hence, since $\mu(B(x,d(x,y)))\leq c\delta(x,y)$,
	\begin{align*}
\mathscr{E}^{d,\mu}_\sigma (u) 
&=\iint_{X\times X}\frac{\abs{u(x)-u(y)}^2}{\mu(B(x,d(x,y)))^{1+2\sigma}}d\mu(x)d\mu(y)\\
&\geq C \iint_{X\times X}\frac{\abs{u(x)-u(y)}^2}{\delta(x,y)^{1+2\sigma}}d\mu(x)d\mu(y)\\
&=C\mathscr{E}^{\delta,\mu}_\sigma (u)\\
&=C\sum_{h\in\mathscr{H}}\frac{\abs{\proin{u}{h}}^2}{\mu(Q(h))^{2\sigma}}\\
&=C\sum_{\{h\in\mathscr{H}:\mu(Q(h))\leq\lambda\}}\frac{\abs{\proin{u}{h}}^2}{\mu(Q(h))^{2\sigma}}\\ 	
&\geq\frac{C}{\lambda^{2\sigma}}\sum_{h\in\mathscr{H}}\abs{\proin{u}{h}}^2\\
&=\frac{C}{\lambda^{2\sigma}}\norm{u}^2_{L^2}.	
	\end{align*}
\end{proof}

It might be important to remark that in a general space of homogeneous type, not of regular Ahlfors type, the measure of cubes of the same level can be very different. This is the reason why we are using $\textrm{Ker }\Pi_\lambda$ instead of $\textrm{Ker } P_j$ in the above result. When the space is Ahlfors $\gamma$-regular the two approaches coincide. Elementary examples in which scales do not give a good control of the measure of the cubes, are given by Muckenhoupt weights. Take $X=\mathbb{R}^+$, $d$ the usual distance and $d\mu=wdx$ with $w(x)=x^{-1/2}dx$. The usual dyadic intervals in $\mathbb{R}^+$ satisfy (D1) to (D8) with respect to $d$. On the other hand, the measure of the intervals $I^j_k$ of a fixed level $j$ tends to zero as $k\to\infty$ ($I^j_k=[k2^{-j},(k+1)2^{-j})$).

\begin{corollary}\label{coro:CoercivityPj}
	Let $(X,d,\mu)$ be an Ahlfors $\gamma$-regular space. Then every space of the type $H^\sigma_{\delta,\mu}\cap \textrm{Ker } P_j$, $j\in \mathbb{Z}$, is a space of coercivity for $\mathscr{E}^{d,\mu}_\sigma$. In other words, for every $u\in H^\sigma_{d,\mu}$ with $P_j u=0$, for some $j\in\mathbb{Z}$, we have
	\begin{equation*}
	\mathscr{E}^{d,\mu}_\sigma (u)\geq C\norm{u}^2_{L^2},
	\end{equation*}
	with $C$ depending on $j$.
\end{corollary}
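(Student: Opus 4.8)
The plan is to deduce the statement directly from Theorem~\ref{thm:CoercivityPi} by showing that, in an Ahlfors $\gamma$-regular space, the kernel of $P_j$ is contained in the kernel of $\Pi_\lambda$ for a suitable threshold $\lambda=\lambda(j)$. Since the coercivity inequality of Theorem~\ref{thm:CoercivityPi} is a statement about each individual $u$, it is inherited by any subspace; hence the inclusion $\textrm{Ker } P_j\subseteq\textrm{Ker }\Pi_\lambda$ will be enough to transfer coercivity from $H^{\sigma}_{\delta,\mu}\cap\textrm{Ker }\Pi_\lambda$ down to $H^{\sigma}_{\delta,\mu}\cap\textrm{Ker } P_j$.

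First I would record the Haar description of $\textrm{Ker } P_j$. By (MRA1) and (MRA4) one has the orthogonal splitting $V_{j+1}=V_j\oplus W_j$, where $W_j$ is the linear span of the Haar functions based on cubes of level $j$ (each such $h$ is supported in its cube $Q(h)$, is constant on the offspring, and has vanishing mean, so it lies in $V_{j+1}\cap V_j^{\perp}$). Iterating this splitting and invoking (MRA2) and (MRA3), so that $\mathscr{H}$ is an orthonormal basis and $V_j=\overline{\bigoplus_{j'<j}W_{j'}}$, one obtains
\[
\textrm{Ker } P_j=V_j^{\perp}=\overline{\textrm{span}}\,\set{h\in\mathscr{H}:\, j(h)\geq j}.
\]
Equivalently, $u\in\textrm{Ker } P_j$ precisely when $\proin{u}{h}=0$ for every $h$ with $j(h)<j$; all the surviving Haar coefficients of $u$ are based on cubes of level at least $j$.

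Next I would convert the condition ``level $\geq j$'' into the condition ``measure $\leq\lambda$'', and this is the step where Ahlfors $\gamma$-regularity is essential. For $Q\in\mathcal{D}_{j'}$ property (D2) gives $\textrm{diam}\,Q\simeq\nu^{j'}$ together with bounded eccentricity, and combining this with $\mu(B(x,r))\simeq r^{\gamma}$ produces the two-sided bound $\mu(Q)\simeq\nu^{\gamma j'}$, with comparability constants uniform over all cubes of the same level. Consequently, if $j(h)\geq j$ then $\mu(Q(h))\leq c_2\,\nu^{\gamma j(h)}\leq c_2\,\nu^{\gamma j}=:\lambda$, using $0<\nu<1$. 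Thus every $u\in\textrm{Ker } P_j$ has all of its Haar coefficients supported on cubes with $\mu(Q(h))\leq\lambda$, which is exactly the statement $\Pi_\lambda u=0$, i.e. $\textrm{Ker } P_j\subseteq\textrm{Ker }\Pi_\lambda$. To close the argument, given $u\in H^{\sigma}_{d,\mu}$ with $P_ju=0$, the estimate $\mu(B(x,d(x,y)))\leq C\delta(x,y)$ of Lemma~\ref{lemma:dyadicdistance} shows $H^{\sigma}_{d,\mu}\subseteq H^{\sigma}_{\delta,\mu}$, so $u\in H^{\sigma}_{\delta,\mu}\cap\textrm{Ker }\Pi_\lambda$ with $\lambda=c_2\nu^{\gamma j}$, and Theorem~\ref{thm:CoercivityPi} yields $\mathscr{E}^{d,\mu}_\sigma(u)\geq C\norm{u}^2_{L^2}$ with a constant comparable to $\lambda^{-2\sigma}=(c_2\nu^{\gamma j})^{-2\sigma}$, hence depending only on $j$ and the geometric constants.

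The only real obstacle is the uniform two-sided comparison $\mu(Q)\simeq\nu^{\gamma j(Q)}$ over all cubes of a fixed level; this is precisely what fails outside the Ahlfors-regular class. As the remark preceding the statement illustrates (the weight $d\mu=x^{-1/2}dx$ on $\mathbb{R}^{+}$), cubes of a fixed level can have arbitrarily small measure in a general space of homogeneous type, so $\textrm{Ker } P_j$ need not be contained in any $\textrm{Ker }\Pi_\lambda$ and the reduction collapses. The content of the corollary is therefore that Ahlfors regularity makes the scale index $j$ a faithful proxy for the measure threshold $\lambda$, after which the result is immediate from Theorem~\ref{thm:CoercivityPi}.
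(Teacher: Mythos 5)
Your proposal is correct and follows essentially the same route as the paper: the paper's own (two-line) proof likewise uses (D2) together with Ahlfors $\gamma$-regularity to get $\mu(Q)\simeq\nu^{\gamma j}$ for $Q\in\mathcal{D}_j$, concludes that $P_ju=0$ forces $\Pi_\lambda u=0$ for a suitable $\lambda$, and then invokes Theorem~\ref{thm:CoercivityPi}. Your write-up merely fills in the details (the Haar description of $\textrm{Ker } P_j$ and the explicit choice $\lambda=c_2\nu^{\gamma j}$) that the paper leaves implicit.
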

\begin{proof}
	From (D2) if $Q\in\mathcal{D}_j$ we have that $\mu(Q)\simeq\nu^{\gamma j}$, because of the Ahlfors $\gamma$-regularity of the space. Hence if $P_ju=0$ then for some $\lambda>0$ we also have that $\Pi_{\lambda}u=0$.
\end{proof}

Let us observe that, for a given bounded set $\Omega$ in $X$, $\{u\in H^\sigma_{d,\mu}: u$ vanishes outside $\Omega$ and $\int ud\mu=0\}$ is also a space of coercivity for $\mathscr{E}^{d,\mu}_\sigma$, since $\Omega\subset Q$ for some $Q\in\mathcal{D}$ and this space is a closed subspace of some $H^\sigma_{\delta,\mu}\cap \textrm{Ker }P_j$.

\section{Finite energy and regularity}\label{sec:finiteenergyregularity}
For a given Ahlfors $\gamma$-regular space with $\gamma>0$, we have that, with the notation of Section~\ref{sec:energyandspacescoercivity}, 
\begin{equation*}
\mathscr{E}^{d,\mu}_\sigma(u)=\iint_{X\times X}\frac{\abs{u(x)-u(y)}^2}{\mu(B(x,d(x,y)))^{1+2\sigma}}\mu(x)d\mu(y)\simeq
\iint_{X\times X}\frac{\abs{u(x)-u(y)}^2}{d(x,y)^{\gamma+2\gamma\sigma}}d\mu(x)d\mu(y).
\end{equation*}
Then 
\begin{equation*}
\mathscr{E}^{d,\mu}_{s/\gamma}(u)\simeq\iint_{X\times X}\left(\frac{\abs{u(x)-u(y)}}{d(x,y)^{s}}\right)^2 \frac{d\mu(x)d\mu(y)}{d(x,y)^\gamma}
\end{equation*}
and the space
\begin{equation*}
H^{s/\gamma}_{d,\mu}=\set{u\in L^2:\mathscr{E}^{d,\mu}_{s/\gamma}(u)<\infty}
\end{equation*}
is a Hilbert space with the norm
\begin{equation*}
\norm{u}_{H^{s/\gamma}_{d,\mu}}=\norm{u}_{L^2}+\sqrt{\mathscr{E}^{d,\mu}_{s/\gamma}(u)}.
\end{equation*}
The inner product is given by the usual in $L^2$ plus the bilinear form
\begin{equation*}
B^{d,\mu}_{s/\gamma}(u,v)=\iint_{X\times X}\frac{v(x)-v(y)}{d^s(x,y)}\frac{u(x)-u(y)}{d^s(x,y)}\frac{d\mu(x)d\mu(y)}{d^\gamma(x,y)}.
\end{equation*}

Aside from constant functions, there are non constant functions with finite energy and finite $L^2$ norm. Set $\Lambda^\beta(X,d)$ to denote the functions which are of H\"{o}lder-Lipschitz class. That is $u\in\Lambda^\beta(X,d)$ if $\abs{u(x)-u(y)}\leq c d^\beta(x,y)$ for $x$, $y\in X$ and some constant $C>0$.

\begin{lemma}\label{lemma:welldefinedenergy}
Let $u$ be a compactly supported $\Lambda^\beta(X,d)$ function defined on $X$. Then, with $s<\beta$ we have that $\mathscr{E}^{d,\mu}_{s/\gamma}(u)<\infty$.
\end{lemma}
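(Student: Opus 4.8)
The plan is to work with the Ahlfors equivalence
\[
\mathscr{E}^{d,\mu}_{s/\gamma}(u)\simeq \iint_{X\times X}\frac{\abs{u(x)-u(y)}^2}{d(x,y)^{\gamma+2s}}\,d\mu(x)\,d\mu(y)
\]
recorded just above, so that it suffices to bound the right-hand double integral. Write $K=\supp u$. Since $K$ is compact it sits inside some ball $B(x_0,R)$, and Ahlfors regularity gives $\mu(K)\leq \mu(B(x_0,R))<\infty$; moreover $u$, being $\Lambda^\beta$ with compact support, is continuous and compactly supported, hence bounded, so $\norm{u}_{L^\infty}<\infty$. The integrand vanishes whenever both $x$ and $y$ lie outside $K$, because then $u(x)=u(y)=0$, so the effective region of integration is contained in $(K\times X)\cup(X\times K)$. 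By the symmetry of the integrand a factor of two accounts for the second region, and it is enough to estimate $\iint_{K\times X}$. For fixed $x\in K$ I would split the inner $y$-integral at a fixed radius $r_0>0$ (say $r_0=1$) into a near-diagonal piece over $B(x,r_0)$ and a far piece over $\set{y:d(x,y)\geq r_0}$.

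On the near-diagonal piece the H\"older bound $\abs{u(x)-u(y)}\leq c\,d(x,y)^{\beta}$ yields
\[
\frac{\abs{u(x)-u(y)}^2}{d(x,y)^{\gamma+2s}}\leq c^2\,d(x,y)^{-\gamma+2(\beta-s)}.
\]
Because $\beta>s$ the exponent $2(\beta-s)$ is strictly positive, so Lemma~\ref{lem:localglobalintegrability}(b), applied with $s$ there replaced by $2(\beta-s)$, gives
\[
\int_{B(x,r_0)}d(x,y)^{-\gamma+2(\beta-s)}\,d\mu(y)\simeq r_0^{2(\beta-s)},
\]
a finite bound independent of $x$. This is precisely where the hypothesis $s<\beta$ enters, and it is the heart of the matter: it converts the raw local singularity $d^{-\gamma-2s}$, which by Lemma~\ref{lem:localglobalintegrability}(a) is \emph{not} integrable on $B(x,r_0)$, into the integrable singularity above.

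On the far piece I would abandon the H\"older structure and use only the crude bound $\abs{u(x)-u(y)}\leq 2\norm{u}_{L^\infty}$, so that Lemma~\ref{lem:localglobalintegrability}(b), now with $s$ replaced by $2s$, gives
\[
\int_{d(x,y)\geq r_0}\frac{\abs{u(x)-u(y)}^2}{d(x,y)^{\gamma+2s}}\,d\mu(y)\leq 4\norm{u}^2_{L^\infty}\int_{d(x,y)\geq r_0}\frac{d\mu(y)}{d(x,y)^{\gamma+2s}}\simeq 4\norm{u}^2_{L^\infty}\,r_0^{-2s},
\]
again finite and uniform in $x$. Adding the two pieces produces a constant $C$, depending only on $\beta,s,\gamma,\norm{u}_{L^\infty}$ and $r_0$, with $\int_X \frac{\abs{u(x)-u(y)}^2}{d(x,y)^{\gamma+2s}}\,d\mu(y)\leq C$ for every $x\in K$; integrating in $x$ over $K$ and invoking $\mu(K)<\infty$ gives $\iint_{K\times X}\leq C\,\mu(K)<\infty$, and the symmetric region is handled by the factor of two. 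The only genuine obstacle is the near-diagonal estimate, where the gap $\beta-s>0$ must be spent to beat the nonintegrable singularity of Lemma~\ref{lem:localglobalintegrability}(a); the far estimate and the final integration in $x$ are routine bookkeeping with the two integrability regimes of Lemma~\ref{lem:localglobalintegrability}.
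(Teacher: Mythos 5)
Your proof is correct and uses essentially the same two ingredients as the paper: the H\"older bound together with Lemma~\ref{lem:localglobalintegrability}(b) to tame the singularity near the diagonal (where $s<\beta$ is spent), and the $L^\infty$ bound together with the global integrability of $d^{-\gamma-2s}$ off the diagonal. The only difference is cosmetic bookkeeping: you split the inner integral at a fixed radius $r_0$ around each $x$ and then integrate over the finite-measure support, whereas the paper splits $X\times X$ into three regions at scale $2R$ around a center $x_0$ of the supporting ball; both routes yield the same estimates.
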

\begin{proof}
Notice that since the support of $u$ is bounded, say $\supp u\subset B(x_0,R)$, the function $U(x,y)=\left(\frac{u(x)-u(y)}{d(x,y)^s}\right)^2$ is supported in $(B(x_0,R)\times X)\cup (X\times B(x_0,R))\subset$ $(B(x_0,R)\times B^c(x_0,2R))\cup (B^c(x_0,2R)\times B(x_0,R))\cup (B(x_0,2R)\times B(x_0,2R))$, we may write 
\begin{align*}
\iint_{X\times X}U(x,y)\frac{d\mu(x)d\mu(y)}{d^\gamma(x,y)}
&= \int_{d(x,x_0)<R}\int_{d(y,x_0)\geq 2R} U(x,y)\frac{d\mu(x)d\mu(y)}{d^\gamma(x,y)} \\
& \qquad + \int_{d(x,x_0)<2R}\int_{d(y,x_0)<2R}U(x,y)\frac{d\mu(x)d\mu(y)}{d^\gamma(x,y)}\\
& \qquad + \int_{d(x,x_0)\geq 2R}\int_{d(y,x_0)<R}U(x,y)\frac{d\mu(x)d\mu(y)}{d^\gamma(x,y)}\\
&= I + II + III.
\end{align*}
For $I$ we have the bound
\begin{align*}
I &\leq 4\norm{u}^2_{\infty}\int_{x\in B(x_0,R)}\left(\int_{y\notin B(x_0,2R)}\frac{d\mu(y)}{d(x,y)^{\gamma+2s}}\right) d\mu(x)\\
&\leq c\norm{u}^2_{\infty}\int_{x\in B(x_0,R)}\left(\int_{y\notin B(x_0,2R)}\frac{d\mu(y)}{d(x_0,y)^{\gamma+2s}}\right) d\mu(x)\\
&\leq c\norm{u}^2_{\infty} R^{1-2s}.
\end{align*}
In the second inequality we have used that $d(x_0,y)<2 d(x,y)$. In fact, $d(x_0,y)\leq d(x_0,x)+d(x,y)<R+d(x,y)<\frac{d(x_0,y)}{2}+d(x,y)$. The third integral $III$ can be bounded similarly. For the second we use the Lipschitz condition for $u$ which gives $U(x,y)\leq \abs{u}^2_{\Lambda^\beta}d(x,y)^{2(\beta-s)}$. Hence
\begin{align*}
II &\leq c \abs{u}^2_{\Lambda^\beta}\iint_{B(x_0,2R)\times B(x_0,2R)}d(x,y)^{2(\beta-s)-\gamma} d\mu(x)d\mu(y)\\
&= c\int_{d(x,x_0)<2R}\left(\int_{d(y,x_0)<2R}d(x,y)^{2(\beta-s)-\gamma}d\mu(y)\right) d\mu(x)\\
&\leq c\int_{d(x,x_0)<2R}\left(\int_{d(y,x)<4R}d(x,y)^{2(\beta-s)-\gamma}d\mu(y)\right) d\mu(x)\\
&\leq \widetilde{c}R R^{2(\beta-s)}\\
&= \widetilde{c}\abs{u}^2_{\Lambda^\beta}R^{2(\beta-s)+1}.
\end{align*}
\end{proof}
On the other hand, the characterization of Lipschitz integral spaces given in \cite{MaSe79Lip} in the abstract setting of spaces of homogeneous type, leads to the next result which in particular proves the continuity of the functions with finite energy for some range of parameters.
\begin{theorem}\label{thm:energyLipschitz}
Let $(X,d,\mu)$ be Ahlfors $\gamma$-regular metric space and $\tfrac{\gamma}{2}<s<1$. If $\mathscr{E}^{d,\mu}_{s/\gamma}(u)$ is finite then $u$ belongs to $\Lambda^{s-\tfrac{\gamma}{2}}(X,d)$. Moreover, the $\Lambda^{s-\tfrac{\gamma}{2}}(X,d)$ seminorm of $u$ is bounded above by a constant times $\sqrt{\mathscr{E}^{d,\mu}_{s/\gamma}(u)}$.
\end{theorem}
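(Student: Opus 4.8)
The plan is to reduce the statement to a Campanato--Morrey mean-oscillation estimate and then quote the characterization of integral Lipschitz spaces on spaces of homogeneous type from \cite{MaSe79Lip}. Writing $u_B=\tfrac{1}{\mu(B)}\int_B u\,d\mu$ for the $\mu$-average over a $d$-ball $B=B(z,r)$, that characterization states, for $0<\beta<1$ on an Ahlfors $\gamma$-regular space, that $u\in\Lambda^\beta(X,d)$ precisely when
\begin{equation*}
[u]^2_\beta:=\sup_{B}\frac{1}{\mu(B)^{1+2\beta/\gamma}}\int_B\abs{u-u_B}^2\,d\mu<\infty,
\end{equation*}
with the $\Lambda^\beta(X,d)$ seminorm comparable to $[u]_\beta$. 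Since $\tfrac{\gamma}{2}<s<1$, the target exponent $\beta=s-\tfrac{\gamma}{2}$ lies in $(0,1)$ and satisfies $1+2\beta/\gamma=2s/\gamma$; hence it suffices to prove
\begin{equation*}
\frac{1}{\mu(B)^{2s/\gamma}}\int_B\abs{u-u_B}^2\,d\mu\leq C\,\mathscr{E}^{d,\mu}_{s/\gamma}(u)
\end{equation*}
with $C$ independent of $B$.

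First I would pass from the oscillation to a double integral. By Jensen's inequality applied to the average defining $u_B$,
\begin{equation*}
\int_B\abs{u(x)-u_B}^2\,d\mu(x)\leq\frac{1}{\mu(B)}\iint_{B\times B}\abs{u(x)-u(y)}^2\,d\mu(x)\,d\mu(y).
\end{equation*}
Then I would reintroduce the energy kernel: because $(X,d)$ is a metric space, $d(x,y)\leq 2r$ for all $x,y\in B$, so
\begin{equation*}
\abs{u(x)-u(y)}^2=d(x,y)^{\gamma+2s}\,\frac{\abs{u(x)-u(y)}^2}{d(x,y)^{\gamma+2s}}\leq(2r)^{\gamma+2s}\,\frac{\abs{u(x)-u(y)}^2}{d(x,y)^{\gamma+2s}}.
\end{equation*}
Integrating over $B\times B$ and using the equivalence $\mathscr{E}^{d,\mu}_{s/\gamma}(u)\simeq\iint_{X\times X}\abs{u(x)-u(y)}^2\,d(x,y)^{-\gamma-2s}\,d\mu(x)\,d\mu(y)$ recorded at the start of this section bounds the double integral by $(2r)^{\gamma+2s}\,C\,\mathscr{E}^{d,\mu}_{s/\gamma}(u)$.

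Combining the two displays with Ahlfors regularity $\mu(B)\simeq r^\gamma$ yields
\begin{equation*}
\int_B\abs{u-u_B}^2\,d\mu\leq\frac{(2r)^{\gamma+2s}}{\mu(B)}\,C\,\mathscr{E}^{d,\mu}_{s/\gamma}(u)\leq C'\,r^{2s}\,\mathscr{E}^{d,\mu}_{s/\gamma}(u)\simeq C'\,\mu(B)^{2s/\gamma}\,\mathscr{E}^{d,\mu}_{s/\gamma}(u),
\end{equation*}
which is exactly the required Campanato bound, uniformly in $B$. The cited characterization then gives $u\in\Lambda^{s-\gamma/2}(X,d)$ together with $[u]_{s-\gamma/2}\leq C\sqrt{\mathscr{E}^{d,\mu}_{s/\gamma}(u)}$. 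I expect the only genuinely delicate step to be the invocation of \cite{MaSe79Lip} in the correct normalization: one must reconcile the measure-normalized quasi-distance used there (under which the space is $1$-regular) with the geometric distance $d$, so that the weight $\mu(B)^{1+2\beta/\gamma}$ translates into $d$-H\"{o}lder regularity of order $\beta=s-\tfrac{\gamma}{2}$ rather than into the normalized exponent. As a cross-check, the same conclusion can be reached directly by telescoping the averages $u_{B(x,2^{-k}r)}$ along a chain of shrinking balls, each consecutive difference being controlled by Cauchy--Schwarz and the resulting series converging geometrically precisely because $s-\tfrac{\gamma}{2}>0$.
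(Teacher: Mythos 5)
Your proposal is correct and follows essentially the same route as the paper: reduce the statement to the Campanato-type bound $\int_B\abs{u-u_B}^2\,d\mu\lesssim\mu(B)^{2s/\gamma}\,\mathscr{E}^{d,\mu}_{s/\gamma}(u)$ and then invoke the integral Lipschitz characterization (Theorem~4) of \cite{MaSe79Lip}, converting back to $d$-H\"older regularity via $\mu(B(x,2d(x,y)))\simeq d^\gamma(x,y)$. The only (cosmetic) difference is that you obtain the oscillation estimate by Jensen's inequality followed by the pointwise bound $d(x,y)\leq 2r$ on $B\times B$, whereas the paper applies Cauchy--Schwarz with the weight $d^{\gamma/2+s}(x,y)$ and Lemma~\ref{lem:localglobalintegrability}; both yield the same exponent bookkeeping.
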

\begin{proof}
With $m_B(u)$ we denote the mean value of $u$ on the ball $B$. Then by Schwartz inequality and Lemma~\ref{lem:localglobalintegrability}, with $B=B(x_0,r)$, we have
\begin{align*}
\mu(B)^{-1-2(\tfrac{s}{\gamma}-\tfrac{1}{2})}\int_B &\abs{u(x)-m_B(u)}^2 d\mu(x) =\mu(B)^{-1-2(\tfrac{s}{\gamma}-\tfrac{1}{2})}\int_{x\in B}\abs{\frac{1}{\mu(B)}\int_{y\in B}(u(x)-u(y)) d\mu(y)}^2 d\mu(x)\\
&=\mu(B)^{-2(1+\tfrac{s}{\gamma})}\int_{x\in B}\abs{\int_{y\in B}\frac{u(x)-u(y)}{d^s(x,y)}\frac{1}{d^{\tfrac{\gamma}{2}}(x,y)}d^{\tfrac{\gamma}{2}+s}(x,y) d\mu(y)}^2 d\mu(x)\\
&\leq \mu(B)^{-2(1+\tfrac{s}{\gamma})}\int_{x\in B}\left(\int_{y\in B}\abs{\frac{u(x)-u(y)}{d(x,y)^s}}^2\frac{d\mu(y)}{d^\gamma(x,y)}\right)
\left(\int_{y\in B}d(x,y)^{\gamma+2s}d\mu(y)\right) d\mu(x)\\
&\leq \mu(B)^{-2(1+\tfrac{s}{\gamma})}\int_{x\in B}\left(\int_{y\in B}\abs{\frac{u(x)-u(y)}{d(x,y)^s}}^2\frac{d\mu(y)}{d^\gamma(x,y)}\right)
\left(\int_{y\in B(x,2r)}d(x,y)^{\gamma+2s}d\mu(y)\right) d\mu(x)\\
&= C r^{-2\gamma(1+\tfrac{s}{\gamma})} r^{2\gamma+2s}\mathscr{E}^{d,\mu}_{s/\gamma}(u)\\
&= C\mathscr{E}^{d,\mu}_{s/\gamma}(u).
\end{align*}
Hence 
\begin{equation*}
\left(\frac{1}{\mu(B)}\int_B \abs{u(x)-m_B(u)}^2 d\mu(x)\right)^{\tfrac{1}{2}}\leq C\sqrt{\mathscr{E}^{d,\mu}_{s/\gamma}(u)}\,\mu(B)^{\tfrac{s}{\gamma}-\tfrac{1}{2}},
\end{equation*}
or in the notation of \cite{MaSe79Lip} $u\in Lip(\tfrac{s}{\gamma}-\tfrac{1}{2},2)$. From  Theorem~4 in \cite{MaSe79Lip} we have that $\abs{u(x)-u(y)}\leq C\sqrt{\mathscr{E}^{d,\mu}_{s/\gamma}(u)}\mu(B)^{\tfrac{s}{\gamma}-\tfrac{1}{2}}$ for every ball containing $x$ and $y$. Since $B(x,2d(x,y))$ contains $x$ and $y$ and $\mu(B(x,2d(x,y)))\simeq d^\gamma(x,y)$ we get that
\begin{equation*}
\abs{u(x)-u(y)}\leq C\sqrt{\mathscr{E}^{d,\mu}_{s/\gamma}(u)}\,d(x,y)^{s-\tfrac{\gamma}{2}},
\end{equation*}
as desired.
\end{proof}
\begin{corollary}\label{coro:finiteenergycontinuousf}
If $\tfrac{\gamma}{2}<s<1$ and $\mathscr{E}^{d,\mu}_{s/\gamma}(u)<\infty$ then the function $u$ is continuous.
\end{corollary}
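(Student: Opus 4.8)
The plan is to read off continuity directly from the Hölder regularity already established in Theorem~\ref{thm:energyLipschitz}. The hypotheses $\mathscr{E}^{d,\mu}_{s/\gamma}(u)<\infty$ and $\tfrac{\gamma}{2}<s<1$ place us exactly in the scope of that theorem, so I would first invoke it to conclude that $u\in\Lambda^{s-\tfrac{\gamma}{2}}(X,d)$, together with the quantitative estimate $\abs{u(x)-u(y)}\leq C\sqrt{\mathscr{E}^{d,\mu}_{s/\gamma}(u)}\,d(x,y)^{s-\tfrac{\gamma}{2}}$ valid for all $x,y\in X$.

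The key observation is that the Hölder exponent $\beta:=s-\tfrac{\gamma}{2}$ is strictly positive, precisely because $s>\tfrac{\gamma}{2}$; this is the only place where the lower restriction on $s$ enters. Given any $x_0\in X$ and $\varepsilon>0$, one chooses $\delta>0$ so small that $C\sqrt{\mathscr{E}^{d,\mu}_{s/\gamma}(u)}\,\delta^{\beta}<\varepsilon$, which is possible exactly because $\beta>0$; then $d(x,x_0)<\delta$ forces $\abs{u(x)-u(x_0)}<\varepsilon$. Since the choice of $\delta$ is independent of $x_0$, this argument in fact yields uniform continuity of $u$ on $X$, and continuity follows a fortiori.

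The only point deserving a word of care---and the closest thing to an obstacle---is that finiteness of the energy is a priori merely an $L^2(X,\mu)$ statement, so $u$ is initially an equivalence class rather than a genuine function. I would note that the pointwise Hölder inequality supplied by Theorem~\ref{thm:energyLipschitz} is an assertion about a distinguished representative, which is therefore the continuous one; this representative is uniquely determined because, under Ahlfors $\gamma$-regularity, every ball has positive measure, so the support of $\mu$ is all of $X$ and two continuous functions agreeing $\mu$-almost everywhere must coincide everywhere. With that identification the corollary is immediate.
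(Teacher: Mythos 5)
Your argument is correct and is exactly the one the paper intends: the corollary is stated as an immediate consequence of Theorem~\ref{thm:energyLipschitz} (the paper gives no separate proof), since the H\"older exponent $s-\tfrac{\gamma}{2}$ is positive. Your additional remark about choosing the continuous representative of the $L^2$ class is a sensible clarification but does not change the substance.
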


\section{Existence of weak solutions and Green's functions}\label{sec:weaksolutionsandGreenfunction}

In this section we prove the existence of weak solution for the problem
\begin{equation*}
\left\{
\begin{array}{ll}
 D^{2s}_d u = f\\
u\in H^{s/\gamma,0}_{d,\mu} \,  
\end{array}
\right.
\end{equation*}
for $f$ in the dual of $H^{s/\gamma,0}_{d,\mu}$. Here $D^{2s}_d u(x)=\int\frac{u(x)-u(y)}{d^{2s}(x,y)}\frac{d\mu(y)}{d^\gamma(x,y)}$ and $H^{s/\gamma,0}_{d,\mu}$ is any of the spaces of coercivity for $\mathscr{E}^{d,\mu}_{s/\gamma}$ introduced in Theorem~\ref{thm:CoercivityPi} and Corollary~\ref{coro:CoercivityPj}. The result follows from Lax-Milgram Theorem and Theorem~\ref{thm:CoercivityPi} in Section~\ref{sec:energyandspacescoercivity}.

\begin{theorem}\label{thm:LaxMilgramforB}
The bilinear form
\begin{equation*}
B^{d,\mu}_{s/\gamma}(u,v)=\int_X\int_{X}\frac{u(x)-u(y)}{d^s(x,y)}\frac{v(x)-v(y)}{d^s(x,y)}\frac{d\mu(x)d\mu(y)}{d^\gamma (x,y)}.
\end{equation*}
is bounded and coercive on any space $H^{s/\gamma,0}_{d,\mu}$. Then for each $f$ in the dual of $H^{s/\gamma,0}_{d,\mu}$ there exists a unique $u\in H^{s/\gamma,0}_{d,\mu}$ such that $B^{d,\mu}_{s/\gamma}(u,\cdot)=f$. Precisely, there exists a unique $u\in H^{s/\gamma,0}_{d,\mu}$ such that $B^{d,\mu}_{s/\gamma}(u,v)=\proin{f}{v}$ for every $v\in H^{s/\gamma,0}_{d,\mu}$.
\end{theorem}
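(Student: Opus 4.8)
The plan is to verify the two hypotheses of the Lax-Milgram theorem—boundedness and coercivity of $B^{d,\mu}_{s/\gamma}$ on the Hilbert space $H^{s/\gamma,0}_{d,\mu}$—and then to invoke it directly. The first observation is that $B^{d,\mu}_{s/\gamma}$ is symmetric and that $B^{d,\mu}_{s/\gamma}(u,u)=\mathscr{E}^{d,\mu}_{s/\gamma}(u)$, so the diagonal of the form is exactly the energy analyzed in the previous sections. This identification is what ties the functional-analytic machinery to the geometric estimates already established.

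For boundedness I would regard the integrand as a product of two factors living on the product space $X\times X$ equipped with the measure $d^{-\gamma}(x,y)\,d\mu(x)d\mu(y)$, and apply the Cauchy-Schwarz inequality there. This yields at once
\begin{equation*}
\abs{B^{d,\mu}_{s/\gamma}(u,v)}\leq \sqrt{\mathscr{E}^{d,\mu}_{s/\gamma}(u)}\,\sqrt{\mathscr{E}^{d,\mu}_{s/\gamma}(v)}\leq \norm{u}_{H^{s/\gamma}_{d,\mu}}\norm{v}_{H^{s/\gamma}_{d,\mu}},
\end{equation*}
so the form is continuous with constant one relative to the energy, hence relative to the full norm.

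Coercivity is where the work of Section~\ref{sec:energyandspacescoercivity} enters. Since $H^{s/\gamma,0}_{d,\mu}$ is by construction a space of coercivity (Theorem~\ref{thm:CoercivityPi} or Corollary~\ref{coro:CoercivityPj}), there is a constant $c>0$ with $\mathscr{E}^{d,\mu}_{s/\gamma}(u)\geq c\norm{u}^2_{L^2}$ for every $u$ in the space. Combining this with the identity $B^{d,\mu}_{s/\gamma}(u,u)=\mathscr{E}^{d,\mu}_{s/\gamma}(u)$ and the elementary bound $\norm{u}^2_{H^{s/\gamma}_{d,\mu}}\leq 2\norm{u}^2_{L^2}+2\mathscr{E}^{d,\mu}_{s/\gamma}(u)$, I would conclude
\begin{equation*}
B^{d,\mu}_{s/\gamma}(u,u)=\mathscr{E}^{d,\mu}_{s/\gamma}(u)\geq \frac{c}{2+2c}\norm{u}^2_{H^{s/\gamma}_{d,\mu}},
\end{equation*}
which is coercivity with respect to the norm of the Hilbert space itself, not merely the $L^2$ norm. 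Before applying Lax-Milgram I would also record that $H^{s/\gamma,0}_{d,\mu}$ is genuinely a Hilbert space: it is the intersection of $H^{s/\gamma}_{d,\mu}$ with the kernel of a bounded $L^2$-projection (either $\Pi_\lambda$ or $P_j$), hence a closed subspace, hence complete.

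With boundedness and coercivity in hand, Lax-Milgram applies verbatim: for every bounded linear functional $f$ on $H^{s/\gamma,0}_{d,\mu}$ there exists a unique $u$ in the space with $B^{d,\mu}_{s/\gamma}(u,v)=\proin{f}{v}$ for all $v$. I do not expect a genuine obstacle here, since the entire difficulty of the problem was front-loaded into the coercivity estimate of Theorem~\ref{thm:CoercivityPi}. The only point requiring minor care is the passage from coercivity against $\norm{u}_{L^2}$ to coercivity against the graph norm $\norm{u}_{H^{s/\gamma}_{d,\mu}}$, which is precisely the norm equivalence guaranteed by the definition of a space of coercivity.
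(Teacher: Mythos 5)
Your proposal is correct and follows essentially the same route as the paper: Cauchy--Schwarz in $L^2\bigl(X\times X,\,d^{-\gamma}(x,y)\,d\mu(x)d\mu(y)\bigr)$ for boundedness, the coercivity estimate of Theorem~\ref{thm:CoercivityPi} for the diagonal, and then Lax--Milgram. You are in fact slightly more careful than the paper in two spots it leaves implicit --- upgrading the $L^2$-coercivity $\mathscr{E}^{d,\mu}_{s/\gamma}(u)\geq c\norm{u}^2_{L^2}$ to coercivity against the full norm $\norm{u}_{H^{s/\gamma}_{d,\mu}}$, and checking that $H^{s/\gamma,0}_{d,\mu}$ is complete --- both of which are genuinely needed to invoke Lax--Milgram.
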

\begin{proof}
The proof of the boundedness of $B^{d,\mu}_{s/\gamma}$ follows from Schwartz inequality in the space $L^2(X\times X,\tfrac{d\mu(x)d\mu(y)}{d^\gamma(x,y)})$, since
\begin{equation*}
\abs{B^{d,\mu}_{s/\gamma}(u,v)}\leq \sqrt{\mathscr{E}^{d,\mu}_{s/\gamma}(u)\mathscr{E}^{d,\mu}_{s/\gamma}(v)}\leq \norm{u}_{H^{s/\gamma,0}_{d,\mu}}\norm{v}_{H^{s/\gamma,0}_{d,\mu}}.
\end{equation*}
The coercivity is proved in Theorem~\ref{thm:CoercivityPi}, since $\mathscr{E}^{d,\mu}_{s/\gamma}(u)=B^{d,\mu}_{s/\gamma}(u,u)$. Hence we can apply Lax-Milgram Theorem. Given $f$ in the dual of $H^{s/\gamma,0}_{d,\mu}$, there exists a unique $u\in H^{s/\gamma,0}_{d,\mu}$ such that
\begin{equation*}
B^{d,\mu}_{s/\gamma}(u,v)=\proin{f}{v}
\end{equation*}
for every $v\in H^{s/\gamma,0}_{d,\mu}$, as desired.
\end{proof}

In particular, from Corollary~\ref{coro:CoercivityPj}, for $s>\tfrac{\gamma}{2}$ and $x\in X$, the functional $\proin{\delta_x}{\varphi}=\varphi(x)$ is well defined for $\varphi\in H^{s/\gamma,0}_{d,\mu}$. 

\begin{theorem}\label{thm:existenceGreenFunction}
For $s>\tfrac{\gamma}{2}$ there exists a function $\mathcal{G}(x,y)$ defined on $X\times X$ such that for each $x\in X$, $\mathcal{G}(x,\cdot)$ belongs to $H^{s/\gamma,0}_{d,\mu}$ as a function of $y$ and satisfies
\begin{equation*}
B^{d,\mu}_{s/\gamma}(\mathcal{G}(x,\cdot),v)=v(x),
\end{equation*}
for every $v\in H^{s/\gamma,0}_{d,\mu}$.
\end{theorem}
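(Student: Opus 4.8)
The plan is to produce, for each fixed $x\in X$, the function $\mathcal{G}(x,\cdot)$ as the Lax--Milgram solution associated to the point-evaluation functional $\delta_x$. Concretely, fix $x\in X$. I would first verify that $\varphi\mapsto\varphi(x)$ defines a \emph{bounded} linear functional $\delta_x$ on $H^{s/\gamma,0}_{d,\mu}$, equipped (by coercivity) with the energy norm $\sqrt{\mathscr{E}^{d,\mu}_{s/\gamma}(\cdot)}$; this is precisely what is needed beyond the mere well-definedness of $\proin{\delta_x}{\varphi}=\varphi(x)$ recorded just before the theorem. Granting boundedness, $\delta_x$ lies in the dual of $H^{s/\gamma,0}_{d,\mu}$, so Theorem~\ref{thm:LaxMilgramforB} furnishes a unique $u_x\in H^{s/\gamma,0}_{d,\mu}$ with $B^{d,\mu}_{s/\gamma}(u_x,v)=\proin{\delta_x}{v}=v(x)$ for every $v\in H^{s/\gamma,0}_{d,\mu}$. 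Setting $\mathcal{G}(x,y):=u_x(y)$ then delivers the desired function.

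The crux is the boundedness of $\delta_x$, and this is where I would spend the effort. Fix the ball $B=B(x,1)$ and split $\abs{\varphi(x)}\leq\abs{\varphi(x)-m_B(\varphi)}+\abs{m_B(\varphi)}$. For the oscillation term, Theorem~\ref{thm:energyLipschitz} gives $\abs{\varphi(x)-\varphi(y)}\leq C\sqrt{\mathscr{E}^{d,\mu}_{s/\gamma}(\varphi)}\,d(x,y)^{s-\gamma/2}$; averaging over $y\in B$, where $d(x,y)<1$ and the exponent $s-\tfrac{\gamma}{2}>0$, yields $\abs{\varphi(x)-m_B(\varphi)}\leq C\sqrt{\mathscr{E}^{d,\mu}_{s/\gamma}(\varphi)}$. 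For the mean term, Cauchy--Schwarz together with $\mu(B)\simeq 1$ from Ahlfors $\gamma$-regularity give $\abs{m_B(\varphi)}\leq\mu(B)^{-1/2}\norm{\varphi}_{L^2}\leq C\norm{\varphi}_{L^2}$. Hence $\abs{\varphi(x)}\leq C\bigl(\sqrt{\mathscr{E}^{d,\mu}_{s/\gamma}(\varphi)}+\norm{\varphi}_{L^2}\bigr)$, and since on a coercivity space the energy dominates the $L^2$ norm, this collapses to $\abs{\varphi(x)}\leq C\sqrt{\mathscr{E}^{d,\mu}_{s/\gamma}(\varphi)}$ with $C$ independent of $\varphi$ (the constant may depend on $x$ through the radius-one ball, but no uniformity in $x$ is required here).

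With $\delta_x$ now bounded, I would invoke Theorem~\ref{thm:LaxMilgramforB} to obtain $u_x$ and set $\mathcal{G}(x,\cdot)=u_x$. Because $s>\tfrac{\gamma}{2}$, Corollary~\ref{coro:finiteenergycontinuousf} shows $u_x$ has a continuous representative, so $\mathcal{G}(x,y)$ is a genuine pointwise function on $X\times X$, and the identity $B^{d,\mu}_{s/\gamma}(\mathcal{G}(x,\cdot),v)=v(x)$ holds by construction. I would additionally remark that the symmetry of $B^{d,\mu}_{s/\gamma}$ forces $\mathcal{G}$ to be symmetric: testing the equation for $u_x$ against $v=u_y$ and the equation for $u_y$ against $v=u_x$ gives $\mathcal{G}(y,x)=B^{d,\mu}_{s/\gamma}(u_x,u_y)=B^{d,\mu}_{s/\gamma}(u_y,u_x)=\mathcal{G}(x,y)$.

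The main obstacle is exactly the estimate in the second paragraph: controlling $\abs{\varphi(x)}$ itself, rather than only the Hölder oscillation supplied by Theorem~\ref{thm:energyLipschitz}, in terms of the energy norm. The decomposition through the mean $m_B(\varphi)$ produces a residual $\norm{\varphi}_{L^2}$ term that must be reabsorbed, and it is precisely the coercivity inequality defining $H^{s/\gamma,0}_{d,\mu}$ that makes this absorption possible; everything downstream is a direct application of the Lax--Milgram statement already established.
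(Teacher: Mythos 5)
Your proof is correct and follows the paper's route exactly: the paper's entire proof consists of taking $f=\delta_x$ in Theorem~\ref{thm:LaxMilgramforB}, after asserting just before the statement (without argument) that $\proin{\delta_x}{\varphi}=\varphi(x)$ is well defined on $H^{s/\gamma,0}_{d,\mu}$ for $s>\tfrac{\gamma}{2}$. Your second paragraph --- boundedness of $\delta_x$ in the energy norm via the H\"older estimate of Theorem~\ref{thm:energyLipschitz}, the split through $m_B(\varphi)$ on a unit ball, and absorption of the residual $\norm{\varphi}_{L^2}$ term by coercivity --- supplies precisely the detail the paper leaves implicit, and is sound.
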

\begin{proof}
Take $f=\delta_x$ in Theorem~\ref{thm:LaxMilgramforB}.
\end{proof}

Since the Euler-Lagrange operator associated to the bilinear form $B^{d,\mu}_{s/\gamma}$ is $D^{2s}_d$ we may at least formally write that, in the weak sense the Green function solves the problem
\begin{equation*}
\left\{
\begin{array}{ll}
 D^{2s}_d \mathcal{G}(x,\cdot) = \delta_x\\
 P_0\mathcal{G}(x,\cdot) = 0,  
\end{array}
\right.
\end{equation*}
where $P_0$ is the projector of $L^2$ onto $H^{s/\gamma,0}_{d,\mu}$.

Let us notice that in the example of the Sierpinski quadrant given in the introduction, $\gamma=\tfrac{\log 3}{\log 2}$. So that, for $\tfrac{\gamma}{2}<s<1$, Theorem~\ref{thm:existenceGreenFunction} provides Green functions with vanishing small resolution in this setting.




\providecommand{\bysame}{\leavevmode\hbox to3em{\hrulefill}\thinspace}
\providecommand{\MR}{\relax\ifhmode\unskip\space\fi MR }
\providecommand{\MRhref}[2]{%
	\href{http://www.ams.org/mathscinet-getitem?mr=#1}{#2}
}
\providecommand{\href}[2]{#2}



\bigskip
\noindent{\footnotesize
	\textsc{Instituto de Matem\'{a}tica Aplicada del Litoral, UNL, CONICET.}

	\smallskip
	\noindent\textmd{CCT CONICET Santa Fe, Predio ``Alberto Cassano'', Colectora Ruta Nac.~168 km 0, Paraje El Pozo, S3007ABA Santa Fe, Argentina.}
}
%

\end{document}